\documentclass[a4paper,12pt]{article}
    \usepackage{amsmath}
    \usepackage{amssymb}
    \usepackage{amsthm}

\usepackage{graphicx,psfrag}
\usepackage{pstricks}
\usepackage{subfigure}

      \newcommand {\del}  {\delta}          
              \newcommand {\ve}   {\varepsilon}
                 \newcommand {\vphi} {\varphi}

                \newcommand {\Om}  {\Omega}
      \newcommand {\pl}   {\partial}        
           
      \newcommand {\RRR}  {{\mathbb R}}     
              
      \newcommand {\ZZZ}  {{\mathbb Z}}

      \newcommand {\bbss}  {\begin{slide}}
      \newcommand {\eess}  {\end{slide}}
      \newcommand {\beq}  {\begin{equation}}
      \newcommand {\eeq}  {\end{equation}}
      \newcommand {\trap}{\text{\pspolygon[linewidth=0.4pt](-0.02,0)(0.32,0)(0.23,0.22)(0.07,0.22)\ \ \,}}

      \newtheorem{theorem}{Theorem}

       \newtheorem{remark}{Remark}
      
      \newtheorem{propo}{Proposition}

\author{Alexander Plakhov\thanks{Center for Research and Development in Mathematics and Applications, Department of Mathematics, University of Aveiro, Portugal and Institute for Information Transmission Problems, Moscow, Russia}}

\title{The problem of minimal resistance for functions and domains}

\date{}

\begin{document}

\maketitle

\begin{center}
To the memory of T. Lachand-Robert.
\end{center}

\begin{abstract}
Here we solve the problem posed by Comte and Lachand-Robert in \cite{CL3} (2001). Take a bounded domain $\Om \subset \RRR^2$ and a piecewise smooth non-positive function $u : \bar\Om \to \RRR$ vanishing on $\pl\Om$. Consider a flow of point particles falling vertically down and reflected elastically from the graph of $u$. It is assumed that each particle is reflected no more than once (no multiple reflections are allowed); then the resistance of the graph to the flow is expressed as $R(u;\Om) = \frac{1}{|\Om|} \int_\Om (1 + |\nabla u(x)|^2)^{-1} dx$. It is required to find $\inf_{\Om,u} R(u;\Om)$. One can easily see that $|\nabla u(x)| < 1$ for all regular $x \in \Om$, and therefore one always has $R(u;\Om) > 1/2$.

We prove that the infimum of $R$ is exactly $1/2$. This result is somewhat paradoxical, and the proof is inspired by, and is partly similar to, the paradoxical solution given by Besicovitch to the Kakeya problem \cite{Bes}.
\end{abstract}

\begin{quote}
{\small {\bf Mathematics subject classifications:} 49Q10, 49K30}
\end{quote}

\begin{quote}
{\small {\bf Key words and phrases:} Newton's problem of least resistance, shape optimization, Kakeya problem.}
\end{quote}

\section{Introduction}

The famous problem of least resistance first stated by Newton in \cite{N} gave rise to a series of interesting variational problems that have been intensively studied in the last two decades (see, e.g., \cite{BrFK}-\cite{LP1}). These problems originate from a simple mechanical model where a flow of point particles with constant velocity is incident on a solid body. When hitting the surface of the body, the particles are elastically reflected from it. The flow is homogeneous and is so rarefied that mutual interaction of particles can be neglected. The force acting on the body and created by collisions of the flow particles with the body is called the {\it resistance}. One needs to find the body, from a prescribed class of bodies, that minimizes the resistance.

One normally imposes an additional condition on the body shape stating that no particle collides with the body more than once. With this assumption, the problem of minimal resistance can be written in a comfortable analytic form. Namely, introduce orthogonal coordinates $x_1,\, x_2,\, z$, assume that the flow falls vertically down with the velocity $(0,0,-1)$, and define the function $u : \Om \to \RRR$ whose graph coincides with the upper part of the body. (Here $\Om \subset \RRR^2$ is the orthogonal projection of the body on the $(x_1,x_2)$-plane and the graph of $u$ is formed by points of collision with the flow particles.) Then the resistance of the body equals $2\rho |\Om|\, R(u;\Om)$, where $\rho$ is the density of the flow, $|\Om|$ is the area of $\Om$, and
\beq\label{o Resistance}
R(u;\Om) = \frac{1}{|\Om|} \int_\Om \frac{dx}{1 + |\nabla u(x)|^2},
\eeq
with $x = (x_1,x_2)$ being a point of the plane.

On the other hand, the condition itself (in what follows it will be called {\it single impact condition}, or just SIC) has a quite complicated analytic form and is difficult to deal with. Therefore it is often substituted with some other conditions, which are stronger but easier to work with \cite{BrFK,BFK}. One such condition which is very popular in the literature states that the body is convex (and therefore the corresponding function $u$ is concave) \cite{BrFK,BFK,BK,LO,LP1}.

A very interesting problem was proposed in 2001 by Comte and Lachand-Robert \cite{CL3}. Let $\Om \subset \RRR^2$ be a bounded domain and let $u : \bar\Om \to \RRR$ be a piecewise $C^1$ function such that $u\rfloor_{\pl\Om} = 0$ and $u(x) < 0$ for all $x \in \Om$. It is additionally assumed that $u$ satisfies the single impact assumption which can be stated analytically as follows: for any regular point $x \in \Om$ and any $t > 0$ such that $x - t\nabla u(x) \in \bar\Om$,
\beq\label{o SIC}
\frac{u(x - t\nabla u(x)) - u(x)}{t} \le \frac{1}{2} (1 - |\nabla u(x)|^2).
\eeq
A function $u$ satisfying the above assumptions will be called {\it admissible}.

\begin{remark}\label{z1}\rm
It can be easily seen that condition (\ref{o SIC}) is equivalent to the following geometric condition: any particle of the vertical flow with the velocity $(0,0,-1)$, after the perfectly elastic reflection from a regular point of the graph of $u$, further moves freely above the graph (it may, however, touch the graph at some points).
\end{remark}

In Fig. \ref{fig_sic} examples of function satisfying and not satisfying SIC are given.

\begin{figure}[h]
\begin{picture}(0,120)
\rput(3.3,2){
\psellipse(0,0)(2,0.25)
\pscurve(-2,0)(-1.5,-0.4)(-1,-0.7)(-0.5,-0.9)(0,-1)(0.5,-0.88)(1,-0.77)(1.5,-0.37)(2,0)
\psline[linewidth=0.5pt,linecolor=red,arrows=->,arrowscale=1.5](-1.7,1.5)(-1.7,-0.1)
\psline[linewidth=0.2pt,linecolor=red](-1.7,-0.1)(-1.7,-0.2)(-1.38,-0.13)
\psline[linewidth=0.5pt,linecolor=red,arrows=->,arrowscale=1.5](-1.38,-0.13)(1.5,0.5)
\psline[linewidth=0.5pt,linecolor=red,arrows=->,arrowscale=1.5](0.5,1.5)(0.5,-0.2)
\psline[linewidth=0.2pt,linecolor=red](0.5,-0.2)(0.5,-0.88)(0.06,-0.22)
\psline[linewidth=0.5pt,linecolor=red,arrows=->,arrowscale=1.5](0.06,-0.22)(-0.4,0.47)
\rput(-2.2,-1.3){(a)}
}
\rput(10.6,2){
\psellipse(0,0)(1.9,0.35)
\pscurve(-1.9,0)(-1.5,-0.6)(-1,-1.04)(-0.5,-1.3)(0,-1.45)(0.5,-1.27)(1,-1)(1.5,-0.57)(1.9,0)
\psline[linewidth=0.5pt,linecolor=red,arrows=->,arrowscale=1.5](-1.5,1.5)(-1.5,-0.2)
\psline[linewidth=0.2pt,linecolor=red,arrows=->,arrowscale=1.5](-1.5,-0.2)(-1.5,-0.6)(-0.27,-0.8)
\psline[linewidth=0.2pt,linecolor=red](-0.27,-0.8)(0.96,-1)(1.05,-0.25)
\psline[linewidth=0.5pt,linecolor=red,arrows=->,arrowscale=1.5](1.05,-0.25)(1.26,1.5)
\rput(-2.2,-1.3){(b)}
}
\end{picture}
\caption{Examples of a function that (a) satisfies and (b) does not satisfy single impact condition.}
\label{fig_sic}
\end{figure}
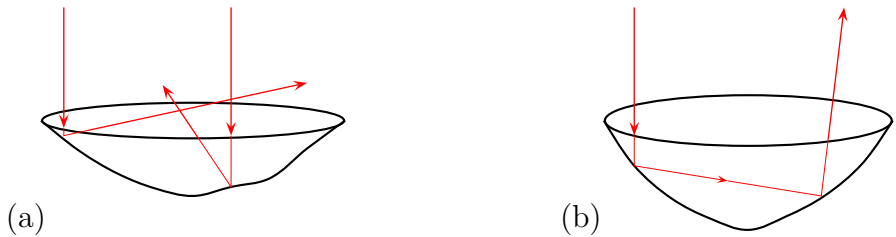

The problem stated by Comte and Lachand-Robert reads as follows.
\vspace{2mm}

{\bf Problem.} {\it Minimize the functional $R(u;\Om)$ (\ref{o Resistance}) over all bounded domains $\Om$ and admissible functions $u : \bar\Om \to \RRR$.}
\vspace{2mm}

\begin{remark}\label{z2}\rm
The original formulation in \cite{CL3} concerns the (seemingly more restricted) problem of minimization over bounded domains $\Om$ tiling the plane. However, as follows from Theorem \ref{t2}, these problems are equivalent.
\end{remark}

Two other interesting problems of minimal resistance for bodies satisfying the single impact condition, with and without rotational symmetry, were studied by Comte and Lachand-Robert in \cite{CL1} and \cite{CL2}.

Curiously, the problem of Comte and Lachand-Robert admits the following mechanical interpretation. An aircraft moves at a very large height in a thin atmosphere. One wants to make small dimples on parts of wings or fuselage so as to diminish the aerodynamic resistance of the aircraft. The problem amounts to optimization of the shape of dimples. Of course, the mechanical assumptions adopted here are oversimplified, especially as concerns perfectly elastic reflections of atmospheric particles from the aircraft.

It is easy to see that for any domain $\Om$,\, $\sup_u R(u;\Om) = 1$, and the supremum is attained, as $n \to \infty$, at any sequence of functions of the form $\frac 1n u(x)$. On the other hand, for any admissible function $u$ and any regular point $x \in \Om$ we have
\beq\label{nabla u}
|\nabla u(x)| < 1.
\eeq
This can be derived from both geometric considerations and formula (\ref{o SIC}). Indeed, if $|\nabla u(x)| \ge 1$, the particle reflected from the point $(x,u(x))$ will then move downward (the third component of velocity will be non-positive), and therefore will inevitably hit the graph of $u$ once again.

One can also use analytical reasoning: when $x - t\nabla u(x) \in \pl\Om$, the left hand side of formula (\ref{o SIC}) is positive, and therefore, the right hand side should also be positive.

As a result, one always has $R(u;\Om) > 1/2$, and so, for any $\Om$
$$
\sup_u R(u;\Om) = 1 \quad \text{and} \quad \inf_u R(u;\Om) \ge 1/2.
$$

\begin{figure}[h]
\centering
\vspace*{15mm}
{\includegraphics[width=190pt,height=120pt]{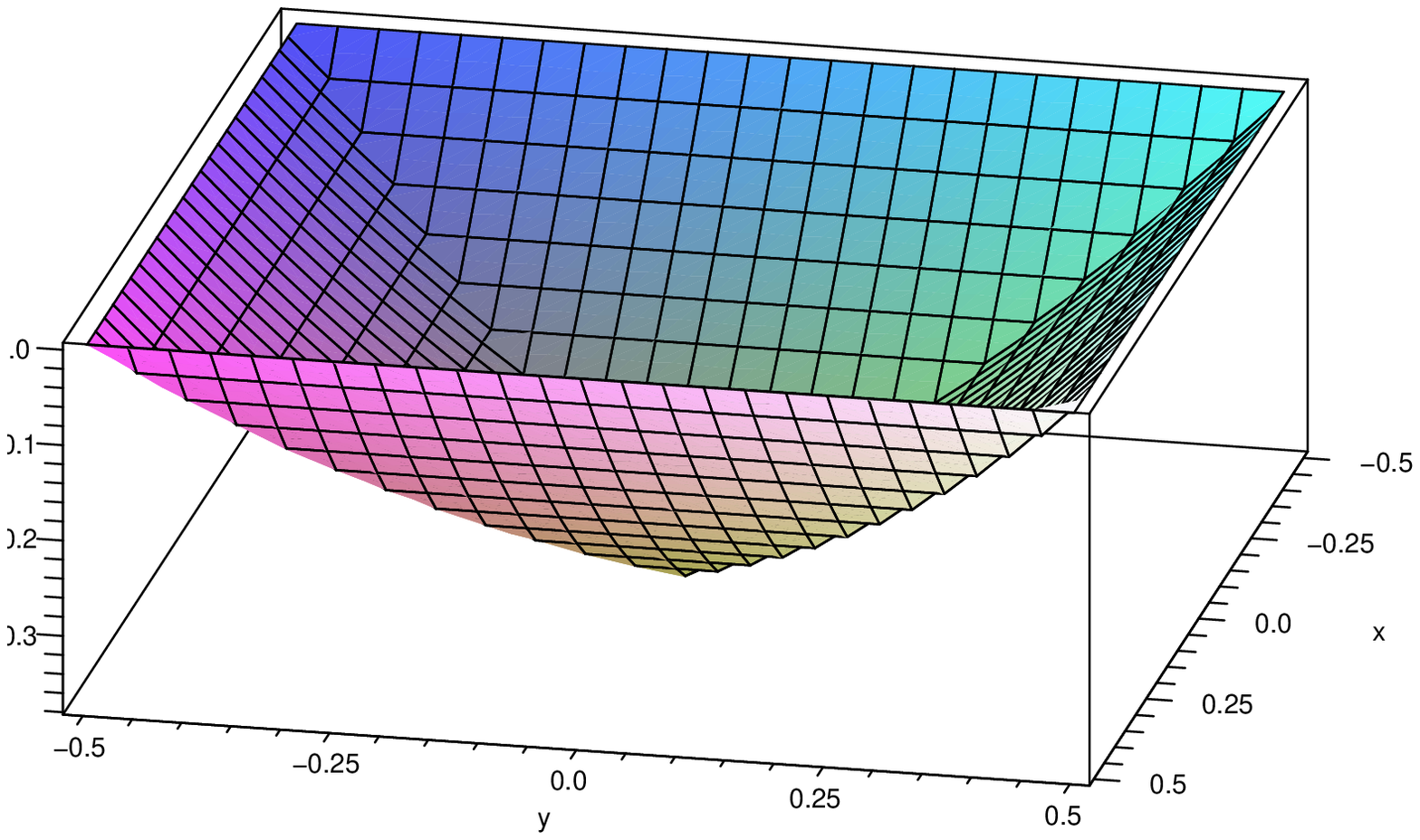}}
{\includegraphics[width=170pt,height=120pt]{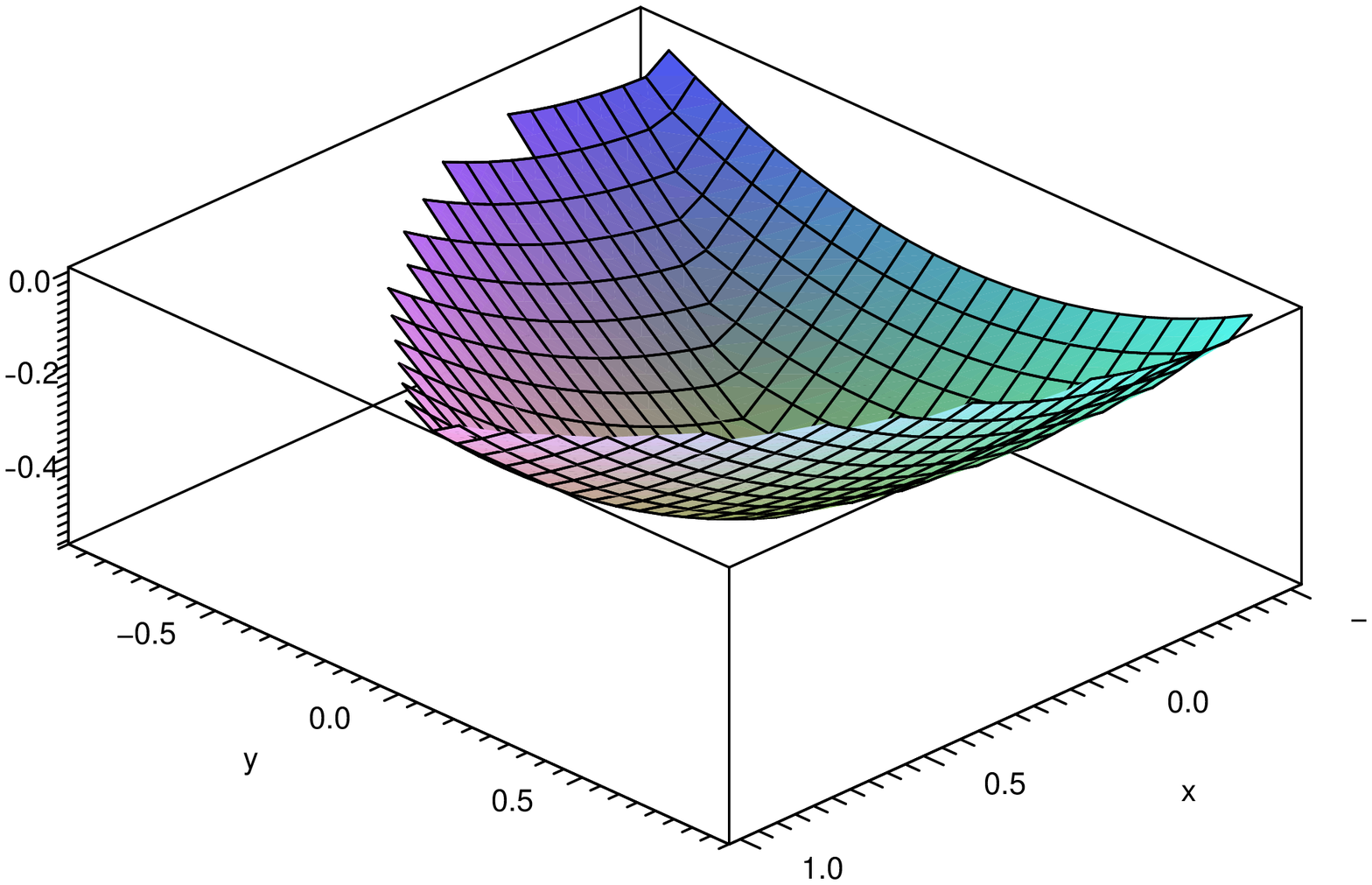}}\\
(a)\hskip 250pt (b)
\caption{Graphs of $u^a$ and $u^b$; a side view. The images are generated by Maple. The trajectory of a single particle of the flow is shown in each case.}
\label{fig_uaub}
\end{figure}

\rput(0,-0.3){
\psline[linewidth=0.8pt,linecolor=red,arrows=->,arrowscale=2](1.2,8.3)(1.2,6.1)
\psline[linewidth=0.8pt,linecolor=red,arrows=->,arrowscale=2](9.1,7.9)(9.1,5.3)
\psline[linewidth=0.8pt,linecolor=red,arrows=->,arrowscale=2](1.2,6.1)(5.5,6.4)
\psline[linewidth=0.8pt,linecolor=red,arrows=->,arrowscale=2](9.1,5.3)(12.8,5.5)
}

Until now it was not even known if $\inf_{\Om,u} R(u;\Om)$ is equal to or greater than $1/2$. It was found in \cite{CL3} that
$$
R(u^a;\Om^a) \approx 0.593
$$
for the function $u^a(x_1,x_2) = \max\{ \vphi(|x_1| + 1/2),\, \vphi(|x_2| + 1/2) \}$, with $\vphi(r) = (r^2-1)/2$ and $\Om^a = (-1/2,\, 1/2) \times (-1/2,\, 1/2)$. Besides, it was shown in \cite{comment} that
$$
R(u^b;\Om^b) \approx 0.581,
$$
where $u^b$ and $\Om^b$ are defined as follows. Take an equilateral triangle $ABC$ with unit sides and denote by $r_A(x),\, r_B(x),\, r_C(x)$ the distances from $x$ to $A,\, B,\, C$; then $u^b(x) = \max \{ \vphi(r_A(x)),\, \vphi(r_B(x)),\, \vphi(r_C(x))\}$ and $\Om^b = \{ x : r_A(x) < 1,\, r_B(x) < 1,\, r_C(x) < 1 \}$ is a Reuleaux triangle. The images of $u^a$ and $u^b$ generated by Maple are shown in Fig. \ref{fig_uaub}. The indentation on the boundary of the graph (b) is an artefact of the used computer program.

Thus, it was first found that $\inf_u R(u;\Om) < 0.594$, and this estimate was then substituted with a better one, $\inf_u R(u;\Om) < 0.582$. The function $u^a$ and then the function $u^b$ were for some time considered as true minimizers of the Comte\,-\,Lachand-Robert problem.


Here we state our main results.

\begin{theorem}\label{t1}
$\inf_{u,\Om} R(u;\Om) = 1/2$.
\end{theorem}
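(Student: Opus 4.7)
The lower bound $R(u;\Omega) > 1/2$ is already in hand (it is immediate from $|\nabla u|<1$), so the content of the theorem is the construction, for every $\varepsilon>0$, of an admissible pair $(\Omega,u)$ with $R(u;\Omega)<1/2+\varepsilon$. Since $(1+|\nabla u|^2)^{-1}$ is close to $1/2$ precisely when $|\nabla u|$ is close to $1$, this amounts to forcing $|\nabla u|$ to be close to $1$ on most of $\Omega$ without violating SIC. The obstacle is that the familiar ``simple'' constructions (symmetric V-valleys, inverted cones, Newton-type pyramids) all fail SIC as soon as $|\nabla u|>1/\sqrt 3$: a particle reflected from a face of slope exceeding $\tan(\pi/6)$ arrives almost horizontally at the opposite face of the same wedge, contradicting SIC. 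Hence no such construction can drive $R$ below $3/4$, and a genuinely subtler arrangement is needed; this is where the analogy with Besicovitch's Kakeya-set construction enters.

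My plan is to build $\Omega_n$ out of a large number of thin wedge-shaped pieces oriented in many different directions, assembled according to a Perron-tree / Besicovitch pattern. The single building block is a long thin triangular region $T$, with a function $u_T$ which is essentially a linear ramp of slope $\tan\alpha$ (with $\alpha$ to be chosen close to $\pi/4$) pointing along the long axis of $T$, corrected by a narrow collar of width $O(w)$ near the far end on which $u_T$ rises back to $0$. On the main part of $T$ a direct calculation shows that a reflected particle moves along the axis of $T$ towards the apex and rises above the plane $z=0$ well before overshooting the collar, so SIC holds on $T$ in isolation. The many rotated copies of $T$ are then glued together along Besicovitch's sliding-and-overlapping scheme, so that the combined area is only marginally larger than the area of a single $T$ while the wedges remain pairwise disjoint in their interiors and their axes all point outward from a small central region.

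On the wedge interiors the integrand equals $\cos^2\alpha$, while the collars and the small common central region together occupy a fraction $\delta(\alpha,n)$ of $|\Omega_n|$ that tends to $0$ as the construction is refined (this is the Besicovitch area bound). Therefore
\[
R(u_n;\Omega_n)\;\le\;(1-\delta)\cos^2\alpha+\delta,
\]
and sending $\alpha\to\pi/4$ and $\delta\to 0$ gives $R\to 1/2$, proving the theorem.

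The principal difficulty is the second step: simultaneously achieving the Besicovitch-type area reduction and the \emph{global} SIC. Locally on each wedge the reflected ray escapes, but when the wedges are packed together one must check that no reflected ray emerging from one wedge crosses, or is shadowed by, a later wedge's graph. This will not be automatic from Besicovitch's original construction, and the geometric layout of the wedges must be chosen specifically so that the axis-aligned escape corridors remain unobstructed; once this geometric verification is in place, the remainder is the direct estimate above.
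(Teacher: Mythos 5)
Your overall strategy --- thin triangular pieces carrying slopes close to $1$, assembled so that a Besicovitch-type mechanism kills the relative area of the ``bad'' set --- is the right inspiration, and your lower-bound discussion and final limit $R\le(1-\delta)\cos^2\alpha+\delta\to 1/2$ are fine. But there are two genuine gaps, and they sit exactly where the real work lies. First, your building block does not satisfy SIC. A particle reflected from a linear ramp of slope $\tan\alpha$ rises with vertical-to-horizontal ratio $\tfrac{1-\tan^2\alpha}{2\tan\alpha}$, which tends to $0$ as $\alpha\to\pi/4$, so a particle reflected at depth $-D$ needs a free horizontal run of length $\tfrac{2D\tan\alpha}{1-\tan^2\alpha}$ before it regains the level $z=0$. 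Particles reflected near the downstream end of the ramp do not have that run: they either strike the ``collar'' (which in any case cannot rise back to $0$ over a width $O(w)$ from a depth of order $L\tan\alpha$ without violating $|\nabla u|<1$), or, if the ramp descends toward the apex, they fail to clear $z=0$ before reaching the apex, where $u$ must return to $0$. The paper's fix is not a layout adjustment but a different block: on each thin triangle the sloped part is a piece of a circular \emph{paraboloid} $u=\frac{r^2-r_0^2}{2r_0}$ with focus at the apex $(B,0)$, so that every reflected ray passes exactly through that single boundary point at height $0$; between the sloped part and $B$ the graph is excavated to a constant depth $-c\le\inf u$, so the converging rays pass over it. That flat channel reflects particles straight up and has resistance $1$, which brings in the second gap.

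Second, your area accounting assigns the Besicovitch overlapping to the wrong objects, and as written it is self-contradictory: pairwise disjoint congruent wedges cannot have combined area ``only marginally larger than the area of a single $T$.'' In the paper the flat channels are enormous compared with the sloped slivers (heights $n+1$ versus $\sqrt n$), so it is false that the bad set is automatically a small fraction of $|\Om_n|$; the whole point of the Perron-tree doubling is to make the $2^n$ flat channels overlap heavily --- consistent precisely because $u$ equals the same constant $-c_n$ on all of them --- so that their union has area $O(a\ln n)$ while the pairwise disjoint sloped slivers contribute area at least $a\sqrt n$. This is the content of Proposition 1 (disjoint trapezoids, overlapping small triangles, ratios $\kappa^n_k\to 1$), and it also makes the global single-impact check essentially free: each reflected ray travels radially inside its own big triangle, stays above the constant level $-c_n$ over the channel, reaches height exactly $0$ at the apex, and is above the whole graph thereafter. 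You correctly flag the global SIC verification as the principal difficulty and leave it open; with the linear-ramp block it cannot be closed at all, so the proposal as it stands does not prove the theorem.
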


As a simple corollary of Theorem \ref{t1}, we get

\begin{theorem}\label{t2}
For any domain $\Om$ one has $\inf_{u} R(u;\Om) = 1/2$.
\end{theorem}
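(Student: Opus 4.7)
The lower bound $\inf_u R(u;\Om) \ge 1/2$ has already been noted in the introduction, so the task is to show that for every bounded $\Om$ and every $\varepsilon > 0$ there exists an admissible $u$ on $\Om$ with $R(u;\Om) < 1/2 + \varepsilon$. The plan is to reduce directly to Theorem~\ref{t1} by packing $\Om$ with many disjoint scaled-translated copies of a near-optimal configuration supplied by that theorem. The key observation is that the rescaling $u(x) \mapsto \lambda u(x/\lambda)$ on $\lambda \Om$ preserves $|\nb u|$ pointwise, hence preserves both SIC (a geometric condition) and $R$; combined with translation invariance, this yields admissible configurations at every scale and position.

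Pick $(\Om_0, u_0)$ from Theorem~\ref{t1} with $R(u_0;\Om_0) < 1/2 + \varepsilon/2$. The family of all closed scaled-translated copies of $\bar\Om_0$ contained in $\Om$ is a Vitali cover of $\Om$ (the ratio $|V|/\operatorname{diam}(V)^2$ is the same constant for every copy, so the usual bounded-eccentricity hypothesis holds), hence Vitali's covering theorem furnishes finitely many pairwise disjoint copies $\Om^{(1)}, \dots, \Om^{(N)} \subset \Om$ with $\sum_{i=1}^N |\Om^{(i)}| \ge (1-\varepsilon)|\Om|$. Define $u : \bar\Om \to \RRR$ to be the appropriately scaled-translated copy of $u_0$ on each $\Om^{(i)}$ and $u \equiv 0$ elsewhere; the result is piecewise $C^1$, nonpositive, and vanishes on $\pl\Om$ (a negligible downward perturbation on the flat region secures strict negativity on $\Om$ if insisted upon, and changes $R$ by $O(\varepsilon)$).

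The main obstacle is to verify that the glued $u$ satisfies the global SIC, since from $u_0$ we only inherit SIC within each copy. Consider a particle reflected at a regular point $(x,u(x))$ with $x \in \Om^{(i)}$: while its horizontal projection remains in $\Om^{(i)}$, it stays above the graph by the local SIC on that copy; at the first moment the projection crosses $\pl\Om^{(i)}$, its height is $\ge u|_{\pl\Om^{(i)}} = 0$. After that moment, the reflected vertical velocity equals $-1 + 2/(1+|\nb u(x)|^2) > 0$ by (\ref{nabla u}), so the particle continues strictly above $z=0$, and since $u \le 0$ throughout $\Om$ it never meets the graph again; reflections from flat-region points are trivial. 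With admissibility granted, a direct computation yields
\beq
R(u;\Om) = \frac{1}{|\Om|}\Bigl(\sum_{i=1}^N |\Om^{(i)}|\, R(u_0;\Om_0) + \bigl|\Om \setminus \bigcup_{i} \Om^{(i)}\bigr|\Bigr) < (1-\varepsilon)\bigl(\tfrac12 + \tfrac\varepsilon2\bigr) + \varepsilon < \tfrac12 + 2\varepsilon,
\eeq
and letting $\varepsilon \to 0$ concludes.
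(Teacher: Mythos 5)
Your proposal is correct and follows essentially the same route as the paper: both reduce Theorem~\ref{t2} to Theorem~\ref{t1} by packing $\Om$ with finitely many pairwise disjoint scaled-and-translated copies of a near-optimal domain, transplanting the rescaled function onto each copy (which preserves $|\nabla u|$, the single impact condition and $R$), and filling the residual set of measure $<\ve|\Om|$ with a constant. The only differences are minor: the paper produces the packing by an explicit square-lattice-plus-iteration argument (Proposition~\ref{propo2}) rather than by the regular-family version of the Vitali covering theorem, it puts the constant $-c<0$ on the residual set instead of $0$ followed by a perturbation, and --- unlike your write-up --- it does not spell out the verification that the glued function still satisfies the global single impact condition, a point you handle correctly via the positivity of the reflected vertical velocity.
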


These results are quite paradoxical; they mean that most part of the graph of $u$ should be formed by "mirrors" with the angle of inclination close to $45^0$. After the reflection from these mirrors the particles should move a very long way along gently sloping "valleys" below the "zero level", and the total area of these valleys should be close to zero. In the next section we give a solution which, in a part, is close to another paradoxical solution given by Besicovitch to the Kakeya problem \cite{Bes}.

\section{Proof of the main results}

Recall that the Kakeya problem is as follows: among all domains in which a segment of unit length can be continuously turned around through $360^0$, find the domain of smallest area. Besicovitch proved that the infimum of area is zero. An essential part of his proof was the following: he divided an isosceles triangle into a large number of small triangles by joining the apex with points on the base and then shifted the obtained triangles in the directions parallel to the base. The shift length was of course different for different triangles. As a result, a figure of arbitrarily small area was obtained.

Our task is somewhat more difficult, as will be seen below. We also divide an isosceles triangle into a large number of thin triangles and apply a linear transformation to each of them (different transformations for different triangles). Certain parts of the thin triangles should overlap to form a figure of vanishing relative area, and the remaining parts should be disjoint.

Let us first introduce some notation. Take a triangle $ABC$ and draw a segment $MN$ parallel to its base $AC$ with the endpoints on the two lateral sides (see Fig. \ref{fig_doubetriang}). The triangle $ABC$ and the segment $MN$ will be called the {\it big triangle} and the {\it separating segment}, respectively. The open sets $MBN$ and $AMNC$ will be called the {\it small triangle} and the {\it trapezoid} associated with the big triangle $ABC$ and denoted by the signs $\triangle$ and $\trap$. The ratio
$$
\kappa = \frac{\text{dist}(B,\, MN)}{\max\{ |AB|,\, |BC| \}}
$$
will be called {\it the ratio associated with the big triangle} $ABC$, and $\max\{ |AB|,\, |BC| \}$ will be denoted by $r_0$. Here of course $\text{dist}(B,\, MN)$ means the smallest distance from $B$ to a point of the segment $MN$. In Fig. \ref{fig_doubetriang}, the ratio equals $\kappa = |BH|/|BC|$, where $BH$ is the height of the small triangle, and $r_0 = |BC|$.

\begin{figure}[h]
\begin{picture}(0,160)
\rput(5.67,0.35){
\scalebox{1.15}{
\pspolygon(0,0)(2,0)(0.8,4)
\psline(0.2,1)(1.7,1)
\psline[linewidth=0.4pt,linestyle=dashed](0.8,4)(0.8,1)
\rput(-0.22,-0.1){\scalebox{0.87}{$A$}}
\rput(2.2,-0.1){\scalebox{0.87}{$C$}}
\rput(1.02,4.12){\scalebox{0.87}{$B$}}
\rput(-0.08,1){\scalebox{0.87}{$M$}}
\rput(1.97,1){\scalebox{0.87}{$N$}}
\rput(0.77,0.73){\scalebox{0.87}{$H$}}
\psline[linewidth=0.4pt,linecolor=blue,linestyle=dashed](0.7565,4.5)(1.2,-0.6)
\rput(1.29,0.2){\scalebox{0.77}{$P$}}
\rput(1.22,1.2){\scalebox{0.77}{$Q$}}
}
}
\end{picture}
\caption{A big triangle.}
\label{fig_doubetriang}
\end{figure}
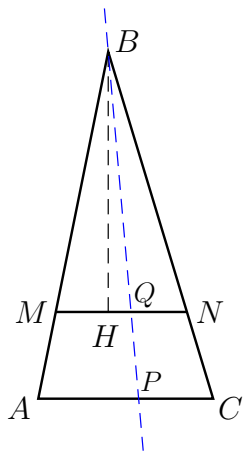

The values $\kappa$ and $r_0$ can also be characterized as follows: the smallest ring centered at $B$ that contains $\trap$ has the outer radius $r_0$ and the inner radius $\kappa r_0$.

Introduce polar coordinates on the $x$-plane $r=r(x),\, \theta=\theta(x)$ with the pole at $B$ and define the function $u_{ABC}$ in the closed domain $ABC$ by
\beq\label{o rule}
u_{ABC}(x) = \left\{ \begin{array}{ll} \frac{r^2(x) - r_0^2}{2r_0}, & \text{if } x \in \trap\\
-c, & \text{if } x \in \triangle \cup MN\\
\ 0, & \text{if } x \in AB \cup BC \cup CA
 \end{array} \right. .
\eeq
The positive constant $c$ is chosen so as
$$
-c \le \inf_{x\in\scalebox{0.75}{\trap}} u_{ABC}(x);
$$
thereby the function $u_{ABC}$ is not uniquely defined. Te function is negative in the interior of the triangle $ABC$ and is zero on its sides.

The graph of the restriction of $u_{ABC}$ on $\trap$ is a piece of a circular paraboloid with vertical axis and with focus at $(B,0)$. This means that a particle of the flow reflected from this piece of paraboloid will then move along a ray through the focus. Thus, the third coordinate of the reflected particle will gradually increase and therefore no further reflections will happen (the trajectory only touches the graph of $u_{ABC}$ at the point$(B,0)$). Further, a particle hitting the graph at a point corresponding to the small triangle $MBN$ (where $u_{ABC}$ is constant) is reflected back in the vertical direction and does not make reflections anymore. Thus, the function $u_{ABC}$ satisfies SIC.

This can also be checked in a purely analytical way. Indeed, if $x \in \trap$ then the vector $\nabla u_{ABC}(x)$ is proportional to $\overrightarrow{Bx}$; besides, $|\nabla u_{ABC}(x)|^2 = r^2(x)/r_0^2$, and the point $x - t\nabla u_{ABC}(x)$ lies on the segment $[B,\, x]$. One easily sees that $u(x - t\nabla u_{ABC}(x)) < u_{ABC}(x)$ (here one should consider the two cases where $x \in \trap$ and $x \in \triangle$), and therefore the left hand side of (\ref{o SIC}) is negative, while the right hand side is positive. If, otherwise, $x \in \triangle$ then $\nabla u_{ABC}(x) = 0$, and therefore the left hand side of (\ref{o SIC}) equals zero and the right hand side equals $1/2$.

A section of the graph of $u_{ABC}$ by the vertical plane through a horizontal straight line containing $B$ (the line $PB$ in Fig. \ref{fig_doubetriang}) is shown in Fig. \ref{fig_section}. The section is formed by an arc of parabola and a horizontal segment. Each particle of the flow that initially belongs to the plane of section, after reflection from the arc or the segment will also belong to this plane. A particle that hits the arc of parabola will then pass through $B$ (see Fig. \ref{fig_section}). One can also see that $|PB| < r_0$ and $|QB| > \kappa r_0$. In Fig. \ref{fig_section} the smallest possible value of the constant, $c = -\inf_{x\in\scalebox{0.75}{\trap}} u_{ABC}(x)$, has been chosen.

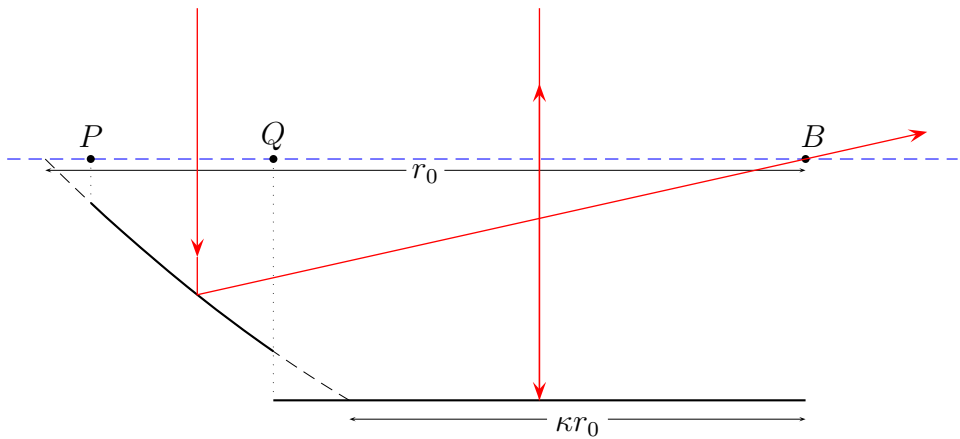
\begin{figure}[h]
\begin{picture}(0,170)
\rput(11,3.8){

\psecurve[linewidth=0.2pt,linestyle=dashed](-11,1.05)(-10,0)(-9,-0.95)(-8,-1.8)(-7,-2.55)(-6,-3.2)(-5,-3.75)
\psecurve[linewidth=0.8pt](-10,0)(-9.4,-0.582)(-9,-0.95)(-8,-1.8)(-7,-2.55)(-6,-3.2)
\psline[linewidth=0.4pt,linecolor=blue,linestyle=dashed](-10.5,0)(2,0)
\psline[linewidth=0.4pt,linestyle=dotted](-9.4,0)(-9.4,-0.582)
\psline[linewidth=0.4pt,linestyle=dotted](-7,0)(-7,-3.2)
\psline[linewidth=0.8pt](-7,-3.2)(0,-3.2)
\rput(0.1,0.3){$B$}
\rput(-9.4,0.3){$P$}
\rput(-7,0.3){$Q$}
\psdots[dotsize=3pt](0,0)(-9.4,0)(-7,0)
\psline[linewidth=0.1pt]{<-}(-6,-3.45)(-3.4,-3.45)
\psline[linewidth=0.1pt]{->}(-2.6,-3.45)(0,-3.45)
\rput(-3,-3.55){$\kappa r_0$}
\psline[linewidth=0.1pt]{<-}(-10,-0.15)(-5.25,-0.15)
\psline[linewidth=0.1pt]{->}(-4.75,-0.15)(0,-0.15)
\rput(-5,-0.2){$r_0$}
\psline[linewidth=0.5pt,linecolor=red,arrows=->,arrowscale=2](-8,2)(-8,-1.3)
\psline[linewidth=0.5pt,linecolor=red,arrows=->,arrowscale=2](-8,-1.3)(-8,-1.8)(1.6,0.36)
\psline[linewidth=0.5pt,linecolor=red,arrows=->,arrowscale=2](-3.5,2)(-3.5,-3.2)
\psline[linewidth=0.5pt,linecolor=red,arrows=->,arrowscale=2](-3.5,-3.2)(-3.5,1)
}
\end{picture}
\caption{A vertical section of the graph of $u_{ABC}$.}
\label{fig_section}
\end{figure}

One can now estimate the resistance associated with the trapezoid,
$$
R(u_{ABC}\rfloor_{\scalebox{0.75}{\trap}};\, \trap) = \frac{1}{|\trap|} \int_{\scalebox{0.75}{\trap}} \frac{dx}{1 + \frac{r^2(x)}{r_0^2}}.
$$
Taking into account that $r(x)/r_0 \ge \kappa$, one obtains that
$$
R(u_{ABC}\rfloor_{\scalebox{0.75}{\trap}};\, \trap) \le \frac{1}{1 + \kappa^2}.
$$
On the other hand, the resistance of the small triangle equals 1.

Intuitively, if $\kappa$ is close to to 1, the slope of motion of reflected particles will be small and therefore the resistance will be close to $1/2$. However, in this case the relative area of the small triangle (and therefore the resistance of the big triangle $ABC$) will be close to 1. The idea of the proof is to take a large collection of big triangles such that the associated small triangles effectively overlap, and so the relative area of their union is small.

For each natural $n$ we take a family of $2^n$ big triangles enumerated by $k = 1,\, 2,\, 3, \ldots, 2^n$. Let $\kappa_k^n$ be the associated ratios, $\triangle_k^{\!n}$ and $\trap_k^{\!n}$ be the corresponding small triangles and trapezoids, and denote
$$
\triangle^{\!n} = \cup_{k=1}^{2^n} \triangle_k^{\!n} \quad \text{and} \quad \trap^{\!n} = \cup_{k=1}^{2^n} \trap_k^{\!n}.
$$

\begin{propo}\label{propo1}
There exist families of triangles $\triangle_k^{\!n}$,\, $n = 1,\, 2, \ldots$;\, $k = 1, \ldots, 2^n$ satisfying the following conditions:

(i) for each $n$, the sets $\,\trap_1^{\!n},\ \,\trap_2^{\!n}, \ldots, \,\trap_{2^n}^{\!n},\ \triangle^{\!n}$ are mutually disjoint;
$$
\text{(ii)} \hspace{2mm} \lim_{n\to\infty} \frac{| \triangle^{\!n}|}{| \trap^{\!n}|} =0; \hspace*{90mm}
$$

(iii) there exists a sequence $a_n > 0$ converging to zero as $n \to \infty$ such that $\kappa_k^n \ge 1 - a_n$.
\end{propo}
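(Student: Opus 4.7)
My plan follows the Besicovitch/Perron-tree strategy alluded to in the introduction. Fix an auxiliary sequence $a_n\downarrow 0$ (which will bound $1-\kappa_k^n$) and begin with a single isosceles triangle $T$ of height $H$ and very small base width $W$, with $W/H$ to be chosen depending on $n$ and $a_n$. Subdivide $T$ into $2^n$ thin congruent sub-triangles $T_k=BP_{k-1}P_k$ by the $2^n+1$ equally spaced points on the base of $T$.

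The heart of the argument is to select, for each $k$, an affine transformation $A_k$ so that $T_k^n:=A_k(T_k)$ can serve as the $k$-th big triangle of Proposition~\ref{propo1}. One must arrange three things simultaneously: (a) each $T_k^n$ has $\kappa_k^n\ge 1-a_n$, which amounts to keeping the apex of $T_k^n$ within a tolerance $\sim H\sqrt{a_n}$ of the perpendicular bisector of its base; (b) the $2^n$ base segments land at pairwise disjoint positions in the plane; (c) the upper portions of the $T_k^n$ (which become the small triangles $\triangle_k^n$) overlap substantially, with $|\bigcup_k T_k^n|\le\varphi(n)\sum_k|T_k^n|$ for some $\varphi(n)\to 0$. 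I would build $\{A_k\}$ recursively: at stage $j$ pair up the sub-triangles indexed by the $j$-th binary digit of $k$ and apply a carefully sized relative shear-plus-translation within each pair, in the spirit of Perron's tree, bringing the bodies of the paired members into overlap while spreading their base segments apart. For each $k$ the separating segment $MN_k$ is placed parallel to the base of $T_k^n$ at the largest admissible height, which gives (a).

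Once (a), (b), (c) are in place, the three conclusions of Proposition~\ref{propo1} follow immediately. Property (iii) is (a). For property (i), each trapezoid $\trap_k^n$ is a slab of thickness $\asymp a_nH$ sitting above the $k$-th base segment; by (b) these slabs are pairwise disjoint, and every $\triangle_k^n$ lies strictly above $MN_k$ so is disjoint from all the trapezoids. For property (ii) the disjointness in (i) gives $|\trap^n|=\sum_k|\trap_k^n|=(1-(\kappa_k^n)^2)\sum_k|T_k^n|\asymp a_n\sum_k|T_k^n|$, while $|\triangle^n|\le|\bigcup_k T_k^n|\le\varphi(n)\sum_k|T_k^n|$ by (c); hence $|\triangle^n|/|\trap^n|\lesssim \varphi(n)/a_n$, and the choice $a_n=\sqrt{\varphi(n)}$ (with $W/H$ made small enough that the affine budget in (a) suffices) yields both $a_n\to 0$ and $|\triangle^n|/|\trap^n|\to 0$.

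The main obstacle is the recursive overlap estimate (c) under the competing constraints (a) and (b). A naive Perron tree moves sub-triangles by rigid translations and makes the bases coincide, violating (b); conversely, a purely shear-based construction that fixes the baseline keeps the bases disjoint but is too rigid to produce more than a bounded-factor reduction in the union area, far from the needed $\varphi(n)\to 0$. The delicate quantitative point is therefore to combine, at each recursion level, small shears with small horizontal translations in such a way that a genuine Besicovitch-type decay of $\varphi(n)$ is obtained while (a) and (b) remain intact. Pinning down this combined recursion and the interplay between $\varphi(n)$ and $a_n$ is, in my expectation, the technical heart of Proposition~\ref{propo1}.
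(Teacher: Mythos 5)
Your proposal correctly identifies the Besicovitch-type strategy and the three competing constraints, but it stops exactly where the work lies: the recursive construction of the transformations and the quantitative overlap estimate, which you yourself call ``the technical heart'' and leave unresolved. That is a genuine gap, not a detail. Worse, the intuition that steers you toward an unresolved shear-plus-translation recursion is mistaken. You assert that a construction fixing the baseline ``is too rigid to produce more than a bounded-factor reduction in the union area,'' but the paper's doubling procedure does fix the baseline and still gets the needed decay. The point is not to shrink the union of congruent affine copies of a fixed triangle; it is to let the individual small triangles \emph{grow}. At step $m$, each triangle with apex $B$ and separating segment of length $2^{-m}a$ is replaced by two triangles whose apexes $M',N'$ lie beyond $B$ (heights multiplied by $1+\del_m$), whose separating segments are the two halves of the old one, and whose bases still lie on the original base line $AC$. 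One $\del$-doubling adds area less than $\del^2 a h$, so choosing heights $h_m=m+1$ (hence $\del_m=1/m$) makes the union of the $2^n$ small triangles have area $S_n<a(\ln n+3/2)$ while their summed area is $a(n+1)/2$; the trapezoids are nested disjointly inside the original trapezoid of height $d=\sqrt n$, giving $|\trap^{\!n}|>a\sqrt n$, so $|\triangle^{\!n}|/|\trap^{\!n}|<(\ln n+3/2)/\sqrt n\to 0$ and $1-\kappa_k^n=O(1/\sqrt n)$. The gain comes from growth of the pieces relative to their union, not from translations.

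Two of your subsidiary arguments would also fail under the scheme you sketch. For (i) you argue that each $\triangle_k^{\!n}$ ``lies strictly above $MN_k$'' and is therefore disjoint from all trapezoids; once the pieces are translated independently, the segments $MN_k$ are no longer collinear, and a small triangle of one piece can perfectly well invade the trapezoid of another. The paper avoids this because all separating segments partition a single segment $MN$, with every small triangle on one side of that line and every trapezoid on the other, and the trapezoids are disjoint by induction (each doubling produces two disjoint trapezoids inside the parent one). For (ii), your bookkeeping $|\trap^{\!n}|\asymp a_n\sum_k|T_k^n|$ with $|\triangle^{\!n}|\le\varphi(n)\sum_k|T_k^n|$ presumes the $T_k^n$ are congruent images of fixed total mass; in the construction that actually works, $\sum_k|\triangle_k^{\!n}|$ itself grows linearly in $n$ while the union grows only logarithmically, and that disparity---not a choice of $a_n=\sqrt{\varphi(n)}$---is what drives the limit in (ii).
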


This proposition is a key point of the proof. At the first glance it looks paradoxical: according to (iii), the area of each trapezoid $\trap_k^{\!n}$ is much smaller than the area of $\triangle_k^{\!n}$. On the other hand, (ii) implies that the small triangles $\triangle_k^{\!n}$,\, $k = 1, \ldots, 2^n$ strongly overlap, so that the area of their union is much smaller than the area of the union of trapezoids $\trap_k^{\!n}$.

\begin{proof}
First we define the procedure of $\del$-doubling. Take a big triangle $ABC$ with the separating segment $MN$, and let $T$ be the midpoint of $MN$ (see Fig. \ref{fig_doubling}). Denote $|MN| = a$, and let the height of the small triangle $MBN$ be $h$ and the height of the trapezoid $AMNC$ be $d$. Extend the sides $MB$ and $NB$ beyond the point $B$ to obtain the segments $MM'$ and $NN'$, with
$$
|BM'| = \del |BM| \quad \text{and} \quad |BN'| = \del |BN|.
$$
Let the straight lines $M'T$ and $N'T$ intersect the segment $AC$ at the points $C'$ and $A'$, respectively; $N'T$ intersects $MB$ at the point $R$, and $M'T$ intersects $NB$ at the point $S$. The procedure of $\del$-doubling applied to $ABC$ results in the two new big triangles $AM'C'$ and $A'N'C$; their separating lines $MT$ and $TN$ have the length $a/2$ each. The heights of the new small triangles $MM'T$ and $NN'T$ have the same length $(1 + \del)h$. The two obtained trapezoids do not intersect and belong to the original trapezoid, and the area of their union is greater than $ad$.

\begin{figure}[h]
\begin{picture}(0,220)
\rput(5.7,0.3){
\pspolygon(0,0)(3,0)(1,5)
\psline(0.2,1)(2.6,1)
\psline[linewidth=0.4pt,linecolor=blue,linestyle=dashed](0.2,7)(1,5)(1.4,7)
\psline[linewidth=0.4pt,linestyle=dotted](0.2,7)(1.4,7)
\psline[linewidth=0.4pt,linecolor=blue,linestyle=dashed](0.2,7)(1.6,0)
\psline[linewidth=0.4pt,linecolor=blue,linestyle=dashed](1.4,7)(1.4,0)
\psdots[dotsize=2.5pt](1.4,1)(0.8,4)(1.4,4)(1.4,0)(1.6,0)(0.2,7)(1.4,7)
\rput(-0.25,-0.1){$A$}
\rput(3.22,-0.1){$C$}
\rput(1.16,5.2){\scalebox{0.82}{$B$}}
\rput(-0.03,1.1){$M$}
\rput(2.8,1.16){$N$}
\rput(-0.07,7){\scalebox{0.9}{$N'$}}
\rput(1.68,7){\scalebox{0.82}{$M'$}}
\rput(1.22,-0.24){\scalebox{0.9}{$C'$}}
\rput(1.75,-0.22){\scalebox{0.82}{$A'$}}
\rput(1.2,0.73){\scalebox{0.9}{$T$}}
\psline[linewidth=0.4pt,linestyle=dotted](0.8,4)(1.4,4)
\rput(0.52,4){\scalebox{0.9}{$R$}}
\rput(1.63,4.05){\scalebox{0.9}{$S$}}
\psline[linewidth=0.4pt,linecolor=blue,linestyle=dashed](0.6,5)(1.4,5)
\rput(0.35,5){\scalebox{0.8}{$P$}}
\rput(1.65,5){\scalebox{0.8}{$Q$}}
}
\end{picture}
\caption{The procedure of doubling: the triangle $ABC$ is substituted with the triangles $AM'C'$ and $A'N'C$.}
\label{fig_doubling}
\end{figure}
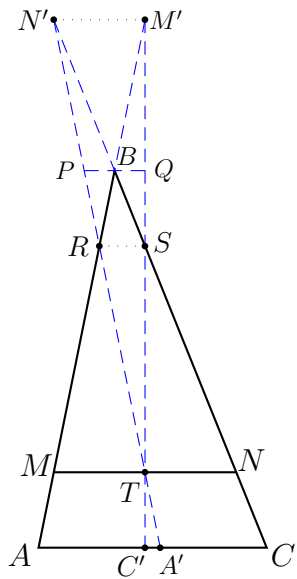

Draw the line through $B$ parallel to $MN$ and denote by $P$ and $Q$ the points of its intersection with $N'T$ and $M'T$. The triangles $TN'N$ and $PN'B$ are similar with the ratio $\del/(1+\del)$; therefore
$$
|PB| = \frac{\del}{1 + \del}\, |TN| = \frac{\del}{1 + \del}\, |MT|.
$$
Thus, the triangles $RPB$ and $RMT$ are similar with the same ratio; in particular, we have $|RB| = \del |MR|/(1+\del)$, hence
\beq\label{ratio}
|RB| = \frac{\del}{1 + 2\del}\, |MB|.
\eeq

Consider the triangles $MBN$ and $RBN'$. Relation (\ref{ratio}) gives the ratio of their sides $RB$ and $MB$, and the ratio of their heights dropped to these sides equals $\del$. Therefore the area of the triangle $RBN'$ equals
$$
|RBN'| = \frac{\del^2}{1 + 2\del}\, |MBN| = \frac{\del^2}{1 + 2\del} \cdot \frac{ah}{2}.
$$
The area of the triangle $SBM'$ is the same. Thus, the increase of the total area as a result of doubling is smaller than $\del^2 ah$.

Let us now apply the procedure of doubling several times. Initially one has the triangle $ABC$, and at the $m$th step ($m \ge 1$) one applies the procedure of $\del_m$-doubling to each of $2^{m-1}$ triangles obtained at the previous step. Thus, the separating lines of the triangles at the $m$th step have the length $2^{-m} a$ each, and their union is the segment $MN$. The height of each such triangle equals
$$
h_m = (1+\del_m)\cdot \ldots \cdot (1+\del_1)h,
$$
and its area equals $2^{-m-1} a\cdot h_m$.

Let $S_m$ be the area of the union of triangles at the $m$th step. The increase of the area at the $(m+1)$th step is smaller than $2^m \cdot \del_{m+1}^2 2^{-m} a h_m$; that is,
\begin{equation}\label{increaseS}
S_{m+1} < S_m + \del_{m+1}^2\, a h_m.
\end{equation}
Take $h_m = m+1$ (and in particular, $h = h_0 = 1$); then we have $\del_m = 1/m$ and $S_0 = a/2$, and by (\ref{increaseS}),
$$
S_{m+1} < S_m + \frac{a}{m+1}.
$$
One easily concludes that $S_m < a(\ln m + 3/2)$ for $m \ge 1$.

In Fig. \ref{fig_3steps} the initial triangle and the triangles obtained at the steps 1 -- 3 of the doubling procedure are shown.

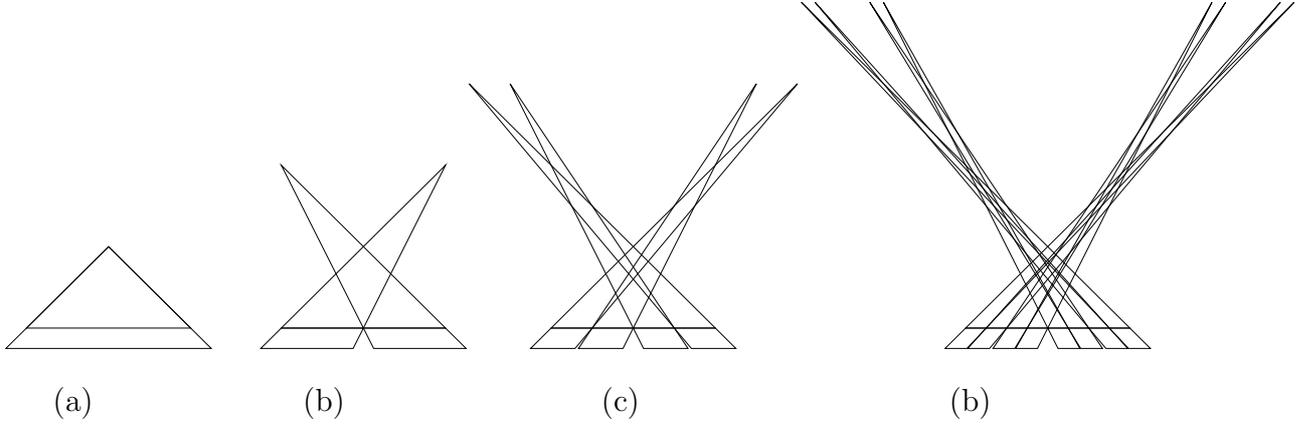
\begin{figure}[h]
\begin{picture}(0,170)
\rput(0,1.25){
\rput(0.25,0){
\scalebox{0.27}{
\pspolygon(-4,0)(4,0)(0,4)
\pspolygon(-4,0)(-5,-1)(5,-1)(0,4)
}
}
\rput(3.6,0){
\scalebox{0.27}{
\pspolygon(-4,0)(4,8)(0,0)
\pspolygon(4,0)(-4,8)(0,0)
\pspolygon(-4,0)(-5,-1)(-0.5,-1)(0,0)
\pspolygon(4,0)(5,-1)(0.5,-1)(0,0)
}
}
\rput(7.15,0){
\scalebox{0.27}{
\pspolygon(-4,0)(8,12)(-2,0)
\pspolygon(-2,0)(6,12)(0,0)
\pspolygon(4,0)(-8,12)(2,0)
\pspolygon(2,0)(-6,12)(0,0)
\pspolygon(-4,0)(-5,-1)(-2.833,-1)(-2,0)
\pspolygon(-2,0)(-2.667,-1)(-0.5,-1)(0,0)
\pspolygon(4,0)(5,-1)(2.833,-1)(2,0)
\pspolygon(2,0)(2.667,-1)(0.5,-1)(0,0)
}
}
\rput(12.6,0){
\scalebox{0.27}{
\pspolygon(4,0)(-12,16)(3,0)
\pspolygon(3,0)(-11.333,16)(2,0)
\pspolygon(-2,0)(8.667,16)(-1,0)
\pspolygon(-1,0)(8,16)(0,0)
\pspolygon(2,0)(-8.667,16)(1,0)
\pspolygon(1,0)(-8,16)(0,0)
\pspolygon(-4,0)(12,16)(-3,0)
\pspolygon(-3,0)(11.333,16)(-2,0)
\pspolygon(5,-1)(4,0)(3,0)(3.9375,-1)
\pspolygon(-5,-1)(-4,0)(-3,0)(-3.9375,-1)
\pspolygon(3.8958,-1)(3,0)(2,0)(2.833,-1)
\pspolygon(-3.8958,-1)(-3,0)(-2,0)(-2.833,-1)
\pspolygon(-2.667,-1)(-2,0)(-1,0)(-1.604,-1)
\pspolygon(2.667,-1)(2,0)(1,0)(1.604,-1)
\pspolygon(-1.5625,-1)(-1,0)(0,0)(-0.5,-1)
\pspolygon(1.5625,-1)(1,0)(0,0)(0.5,-1)
}
}
\rput(-0.2,-0.99){(a)}
\rput(3.1,-0.99){(b)}
\rput(7,-0.99){(c)}
\rput(11.6,-0.99){(b)}
}
\end{picture}
\caption{The original triangle (a); two triangles obtained at the 1st step (b); four triangles obtained at the 2nd step (c); eight triangles obtained at the 3rd step (d).}
\label{fig_3steps}
\end{figure}

For all $m$, the trapezoids of the $m$th step are disjoint. Indeed, let this be true at the $m$th step. Two trapezoids obtained when doubling a triangle of the $m$th step are disjoint and are contained in the trapezoid associated with the original triangle; therefore they do not intersect any other trapezoid obtained at the $(m+1)$th step. The trapezoids also do not intersect the small triangles, since they lie on the opposite sides of the line $MN$. The total area of the trapezoids is greater than $ad$.

Fix $n$ and take $d = \sqrt n$. Let $\triangle_k^{\!n}$ and $\trap_k^{\!n}$,\, $k = 1, \ldots, 2^n$ be the triangles and trapezoids of the $n$th step. We have already verified that (i) is true. Further,
$$
|\triangle^{\!n}| = |\cup_{k=1}^{2^n} \triangle_k^{\!n}| = S_n < a(\ln n + 3/2)
$$
and the total area of trapezoids is
$$
|\,\trap^{\!n}| = |\cup_{k=1}^{2^n} \trap_k^{\!n}| > a \sqrt n;
$$
therefore (ii) is also fulfilled.

Let now $A_kBC_k$ be the $k$th big triangle ($1 \le k \le 2^n$) and $M_kN_k$ be its separating line. Assume without loss of generality that $|A_kB| \ge |C_kB|$. Using that
$$
\frac{|M_kB|}{|A_kB|} = \frac{h_n}{h_n+d} = \frac{n+1}{n + 1 + \sqrt n}\,,
$$
$|M_kN_k| = 2^{-n} a$, and $|A_kB| > n+1 + \sqrt n$, one obtains
$$
\kappa^n_k = \frac{\text{dist}(B, M_kN_k)}{|A_kB|} \ge \frac{|M_kB| - |M_kN_k|}{|A_kB|} \ge \frac{n + 1}{n + 1 + \sqrt n} - \frac{2^{-n} a}{ n+1 + \sqrt n},
$$
so (iii) is also satisfied.

\end{proof}

Now we use relation (\ref{o rule}) to define the function $u_k^n$ in each big triangle, choosing the constant $c = c_n$ to be the same for all $k$. Let $\Om_n = \triangle^{\!n} \cup \trap^{\!n}$. Using (i), we define the function $u_n$ on $\bar\Om_n$ so that its restriction on each big triangle of the $n$th step coincides with the corresponding function $u_k^n$. Using now (ii) and (iii), we obtain the estimates for the resistance of $u_n$,
$$
R(u_n; \Om_n) \le \frac{|\trap^{\!n}|}{|\trap^{\!n}| + |\triangle^{\!n}|}\ \frac{1}{1 + (1-a_n)^2} + \frac{|\triangle^{\!n}|}{|\trap^{\!n}| + |\triangle^{\!n}|} \to 1/2 \quad \text{as} \ \, n \to \infty.
$$
Thus, $\inf_{u,\Om} R(u; \Om) = 1/2$. Theorem \ref{t1} is proved.

Let us now prove Theorem \ref{t2}.

We say that $\Om^i$ is a copy of $\Om$, if there exist a real value $k_i > 0$ and an isometry $f_i$ of the plane such that $f_i (k_i \Om) = \Om^i$.

\begin{propo}\label{propo2}
For any two bounded domains $\Om$ and $\tilde\Om$ there exists a finite family $\{ \Om^i \}$ of mutually non-intersecting copies of $\Om$, all contained in $\tilde\Om$, such that $|\tilde\Om \setminus (\cup_i \Om^i)| < \ve$.
\end{propo}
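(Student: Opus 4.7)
I plan to invoke the Vitali covering theorem, using as covering family the collection of all copies of $\Om$ that lie inside $\tilde\Om$. Given such a Vitali cover, the theorem produces a countable pairwise disjoint subfamily that exhausts $\tilde\Om$ up to a null set, and a finite truncation of this subfamily then gives the required $\ve$-estimate.

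To apply Vitali, I first verify the uniform regularity of the family. Setting $D = \mathrm{diam}(\Om)$, any copy $\Om' = f(k\Om)$ has area $k^2|\Om|$ and is enclosed in a disk of radius $kD$, so the ratio of its area to the area of this disk equals the fixed positive constant $|\Om|/(\pi D^2)$, independent of $k$ and $f$. This is exactly the bounded-eccentricity hypothesis required by Vitali.

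Next I would check the Vitali cover property on $\tilde\Om$: for every $x \in \tilde\Om$ and every $\del > 0$ there is a copy of $\Om$ of diameter less than $\del$ that contains $x$ and lies inside $\tilde\Om$. Since $\tilde\Om$ is open, $r := \mathrm{dist}(x,\pl\tilde\Om) > 0$; fixing any $y_0 \in \Om$ and choosing $k>0$ with $kD < \min(\del,r)$, the translate $k\Om + (x - ky_0)$ contains $x$, has diameter $kD < \del$, and sits inside $B(x,kD) \subset \tilde\Om$.

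With both hypotheses in place, the Vitali covering theorem delivers a countable pairwise disjoint family $\{\Om^i\}_{i\ge 1}$ of copies of $\Om$ in $\tilde\Om$ with $|\tilde\Om \setminus \cup_i \Om^i| = 0$. By countable additivity, $|\tilde\Om \setminus \cup_{i \le N}\Om^i| \to 0$ as $N \to \infty$, so a sufficiently large finite prefix $\{\Om^i\}_{i=1}^N$ satisfies the claim. The only mild bookkeeping step I anticipate is the standard passage between open and closed copies when invoking Vitali, which is harmless under the natural assumption $|\pl\Om| = 0$ (which holds for the Jordan-measurable domains the paper implicitly considers); no deeper obstacle appears.
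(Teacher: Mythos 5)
Your argument is correct, but it takes a genuinely different route from the paper. The paper's proof is elementary and self-contained: it encloses $\Om$ in a square of side $2M$, observes that every square of a lattice therefore contains a copy of $\Om$ occupying a fixed fraction $\del$ of its area, uses a sufficiently fine lattice to capture more than $\frac{\del}{2}|\tilde\Om|$ with disjoint copies, and then iterates on the remainder $\tilde\Om^{j+1} = \tilde\Om^{j}\setminus(\cup_i\Om^{(ij)})$ until $(1-\del/2)^{j_0}<\ve$. That ``capture a fixed proportion and repeat'' scheme is precisely the standard proof skeleton of the Vitali covering theorem, so in effect the paper hand-rolls the special case of the covering principle that you import wholesale. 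What your route buys is brevity and a stronger conclusion (a countable disjoint family exhausting $\tilde\Om$ up to a null set, truncated at the end); the cost is reliance on the less commonly cited ``regular family'' version of Vitali for sets of uniformly bounded eccentricity rather than for balls, so the citation must be to that version (Saks, de Guzm\'an), and your verification of the regularity constant $|\Om|/(\pi D^2)$ and of the fine-cover property inside the open set $\tilde\Om$ is the essential content and is done correctly. The one caveat you flag --- passing between open and closed copies, which requires $|\pl\Om|=0$ --- is not a defect peculiar to your proof: the paper's iteration has the same implicit hypothesis (otherwise the remainders $\tilde\Om^{j}$ acquire a positive-measure part with empty interior that no lattice square can reach), and it is harmless because in the only application of the proposition $\Om=\Om_n$ is a finite union of triangles and trapezoids.
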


\begin{proof}
Take a square $|x_1| < M, \ |x_2| < M$ containing $\Om$ and fix $\del = |\Om|/M^2$. Obviously, $0 < \del < 1$, and any square $Q$ on the plane contains a copy of $\Om$ that occupies the area $\del |Q|$. Further, take a square lattice $x_1 = am, \ x_2 = an, \ a > 0, \ m,\, n \in \ZZZ$, choosing $a$ so small that the squares $Q^i$ of the lattice contained in the domain $\tilde\Om$ occupy more than one half of its area, $|\cup_i Q^i| > \frac 12\, |\tilde\Om|$. For each square $Q^i$ take a copy $\Om^{(i0)}$ of $\Om$ contained in $Q^i$ and such that $|\Om^{(i0)}| = \del|Q^i|$. Thus, we have
$$
|\cup_i \Om^{(i0)}| > \frac{\del}{2} |\tilde\Om|.
$$

Next we inductively define the sequence of domains $\tilde\Om^j$,\, $j = 0,\, 1, \ldots, j_0$ and finite families $\Om^{(ij)} \ (j = 1,\ldots,j_0-1)$ of copies of $\Om$ such that for all $j$ the domains of the family $\{ \Om^{(ij)} \}_i$ are mutually disjoint,
$$
|\cup_i \Om^{(ij)}| > \frac{\del}{2} |\tilde\Om^j|, \quad \tilde\Om^0 = \tilde\Om, \quad \tilde\Om^{j+1} = \tilde\Om^j \setminus (\cup_i \Om^{(ij)}) \ \ \text{for}\, \ 0 \le j \le j_0 - 1,
$$
and $(1-\del/2)^{j_0} < \ve$. We have $|\tilde\Om^j| < (1-\del/2)^{j} |\tilde\Om|$, and therefore,
$$
|\Om \setminus (\cup_{i,j} \Om^{(ij)})| = |\tilde\Om^{j_0}| < \ve.
$$
\end{proof}

\begin{propo}\label{propo3}
For any pair of bounded domains $\Om$,\, $\tilde\Om$ and any admissible function $u : \bar\Om \to \RRR$ there exists an admissible function $\tilde u : \overline{\tilde\Om} \to \RRR$ such that 
$$
R(\tilde\Om; \tilde u) < R(\Om; u) + \ve.
$$
\end{propo}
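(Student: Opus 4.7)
The plan is to pack $\tilde\Om$ with disjoint copies of $\Om$ using Proposition~\ref{propo2}, transplant $u$ onto each copy by the associated similarity, and extend by $0$ on the small leftover. Concretely, apply Proposition~\ref{propo2} with tolerance $\eta = \tfrac12\ve\,|\tilde\Om|$ to obtain mutually disjoint copies $\Om^i = f_i(k_i\Om) \subset \tilde\Om$ with $|\tilde\Om \setminus \cup_i\Om^i| < \eta$. On each $\Om^i$ set $\tilde u(f_i(k_i x)) := k_i\, u(x)$ for $x \in \bar\Om$, and $\tilde u \equiv 0$ on the leftover. Since $f_i$ is an isometry and the homothety $x \mapsto k_i x,\ u \mapsto k_i u$ leaves $|\nabla u|$ pointwise invariant, a direct change of variables gives $R(\tilde u;\Om^i) = R(u;\Om)$. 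Moreover $\tilde u$ is continuous on $\overline{\tilde\Om}$, vanishes on $\pl\tilde\Om$, is piecewise $C^1$, and is non-positive.

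The delicate step, and the main obstacle I anticipate, is verifying the single impact condition (\ref{o SIC}) \emph{globally} for $\tilde u$ across the patched pieces. At any regular $x$ in the leftover, $\nabla\tilde u(x) = 0$ reduces (\ref{o SIC}) to $0 \le 1/2$. At a regular $x \in \Om^i$, I use the geometric picture of Remark~\ref{z1}: along the reflected trajectory, the vertical height is $h(t) = \tilde u(x) + \tfrac{t}{2}(1 - |\nabla\tilde u(x)|^2)$ above the horizontal point $x - t\nabla\tilde u(x)$. While the ray stays in $\Om^i$, the transplanted SIC of $u$ on $\Om$ yields $h(t) \ge \tilde u(x - t\nabla\tilde u(x))$; evaluated at the first exit time $t_1$, where $\tilde u = 0$ on $\pl\Om^i$, this gives $h(t_1) \ge 0$. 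Because $|\nabla\tilde u(x)| < 1$ by (\ref{nabla u}), the map $t \mapsto h(t)$ is non-decreasing, so $h(t) \ge 0$ for all $t \ge t_1$; combined with $\tilde u \le 0$ throughout $\tilde\Om$, this gives $h(t) \ge \tilde u(x - t\nabla\tilde u(x))$ at every admissible $t$, irrespective of whether the ray later crosses the leftover or re-enters some other $\Om^j$.

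The resistance estimate then follows immediately:
\begin{equation*}
R(\tilde u; \tilde\Om) = \frac{1}{|\tilde\Om|}\Bigl[\sum_i |\Om^i|\, R(u;\Om) + |\tilde\Om \setminus \cup_i\Om^i|\Bigr] < R(u;\Om) + \frac{\eta}{|\tilde\Om|} < R(u;\Om) + \ve.
\end{equation*}
The strict negativity $\tilde u < 0$ in the interior required by the definition of \emph{admissible} is violated on the interior of the leftover; this is a routine technicality to repair by replacing the zero extension with a small-amplitude admissible construction on a triangulation of the leftover (e.g., a rescaled variant of $u_{ABC}$ from (\ref{o rule})), at the cost of an additional arbitrarily small increment in $R$ which can be absorbed into $\ve$.
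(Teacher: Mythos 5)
Your proposal is correct and follows essentially the same route as the paper: pack $\tilde\Om$ with disjoint similar copies of $\Om$ via Proposition~\ref{propo2}, transplant $u$ by $u_i(f_i(k_i x)) = k_i u(x)$ so that $R(u_i;\Om^i) = R(u;\Om)$, and bound the leftover's contribution by its relative area. The only difference is that the paper sets $\tilde u \equiv -c$ (a negative constant) on the leftover rather than $0$, which disposes of your final ``strict negativity'' technicality at once while keeping $\nabla\tilde u = 0$ there, so no further patching is needed.
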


\begin{proof}
Take a finite family $\Om^i = f_i(k_i \Om)$ of non-intersecting copies of $\Om$ contained in $\tilde\Om$ and such that $|\tilde\Om \setminus (\cup_i \Om^i)| < \ve |\tilde\Om|$. The transformations $f_i k_i$ induce admissible functions $u_i$ on $\overline{\Om^i}$ by 
$$
u_i(f_i(k_i x)) = k_i u(x) \quad \text{for all} \ \, x \in \bar\Om.
$$
One easily verifies that $R(u_i;\Om^i) = R(u;\Om)$. 

Define the admissible function $\tilde u$ on $\overline{\tilde\Om}$ by
$$
\tilde u(x) = \left\{ 
\begin{array}{ccl}
u_i(x), & \text{if} & x \in \Om^i\\
-c, & \text{if} & x \in \tilde\Om \setminus (\cup_i \Om^i)\\
0, & \text{if} & x \in \pl\tilde\Om,
\end{array}
\right.
$$
where $c$ is an arbitrary positive constant. One has
$$
R(\tilde\Om; \tilde u) = \sum_i \frac{|\Om^i|}{|\tilde\Om|}\, R(u_i;\Om^i) + \frac{|\tilde\Om \setminus (\cup_i \Om^i)|}{|\tilde\Om|} < R(\Om; u) + \ve.
$$
\end{proof}

By Theorem \ref{t1}, there exists a sequence of admissible functions $u_n : \bar\Om_n \to \RRR$ such that $R(u_n;\Om_n) \to 1/2$. Let $\Om$ be a bounded domain. By Proposition~\ref{propo3}, for each natural $n$ there exists an admissible function $\tilde u_n : \bar\Om \to \RRR$ such that $R(\tilde u_n;\Om)< R(u_n;\Om_n) + 1/n$. This implies that $\inf_{u} R(u;\Om) = 1/2$. Theorem \ref{t2} is proved.

\section*{Acknowledgements}

This work was supported in part by FEDER funds through COMPETE – Operational Programme Factors of Competitiveness ("Programa Operacional Factores de Competitividade") and by Portuguese funds through the Center for Research and Development in Mathematics and Applications and the Portuguese Foundation for Science and Technology ("FCT-Funda\c{c}\~ao para a Ci\^encia e a Tecnologia"), within project PEst-C/MAT/UI4106/2011 with COMPETE number FCOMP-01-0124-FEDER-022690, as well as by the FCT research project PTDC/MAT/113470/2009.

\end{document}